% AMS-LaTeX Article
% -----------------------------------------------------------------------
\documentclass{amsart}
\usepackage{amssymb}
\usepackage{graphicx}
\textwidth 13cm

\newcommand{\C}{\mathbb C}

\def\({\left(}
\def\){\right)}

\newtheorem{lema}{Lemma}[section]

\newtheorem*{teorema*}{Theorem}

\newtheorem{lemma}{Lemma}[section]

\newtheorem{theorem}[lema]{Theorem}

\newtheorem{proposition}[lema]{Proposition}

\newtheorem{definition}[lema]{Definition}

  {\par \hfill \fbox{}}
  {\par \hfill \fbox{}}
  {\par \hfill }
  {\par \hfill }

% -----------------------------------------------------------------------

\parindent=5mm
%\newcommand\proof{\vspace{-1ex}{\bf Proof: }}

%\newtheorem{lemma}{Lemma}[section]
%\newtheorem{definition}[lemma]{Definition}
%\newtheorem{theorem}[lemma]{Theorem}
%\newtheorem{corollary}[lemma]{Corollary}
%\newtheorem{proposition}[lemma]{Proposition}
%\newtheorem{remark}[lemma]{Remark}
%\newtheorem{example}[lemma]{Example}

%\sloppy

\def\beq{\begin{equation}}
\def\eeq{\end{equation}}

\def\epsilon{\varepsilon}

\sloppy

% [alphanumeric, roman] enumerates

 % amstex is funny about \atop

\begin{document}

\title{Expansivity property for Composition operators on Orlicz-Lorentz spaces }
\author{Romesh Kumar}
\address{Department of Mathematics, University of Jammu,
Jammu 180006, INDIA.} 
\email{romeshmath@gmail.com}

\author{Rajat Singh}
\address{Department of Mathematics, University of Jammu,
Jammu 180006, INDIA.} \email{rajat.singh.rs634@gmail.com}

\thanks{The second author is supported by UGC under the scheme of JRF}
\subjclass{Primary 47A16, 47B33; Secondary 37D45, 37B05. }
\keywords{Composition operators, Expansivity, Orlicz-Lorentz space.}
\date{September 2022}
%\dedicatory{}
%\commby{}

\begin{abstract} 
In this paper, we investigated the concept of expansivity for composition operators on Orlicz-Lorentz spaces.  We study necessary and sufficient conditions  for expansivity, positive expansivity and uniformly expansivity for composition operators $C_{\tau}$ on $\mathbb{L}^{\varphi,h}(\mu)$. We extend the of results of  \cite{MAI1}  into Orlicz-Lorentz space.The proofs of the  results are essentially based on the results of \cite{MAI1}
\end{abstract}

% ----------------------------------------------------------------------
%\begin{center}
\maketitle
%\end{center}
% ----------------------------------------------------------------------

\section{Introduction and Preliminaries}
The dynamics of continuous linear operators on Banach spaces have received a lot of attention over the last three-four decades. Many links have been established between dynamical systems and other areas of mathematics, such as ergodic theory, number theory, and geometry of Banach spaces. Because of their versatility in the construction of examples in linear dynamics, operator theory, and its applications, many researchers have extensively studied the notion of expansivity and shadowing property in the context of a special class of operators, the weighted shifts. As a result, many dynamical properties for such operators have been thoroughly analyzed and characterized in recent decades, sometimes even before the property in question was fully understood in more general contexts. For more details one can see (\cite{B1}, \cite{NC13}, \cite{NC15}, \cite{NC2018}). \\
In \cite{NC2018}, they have investigated the relationship between expansivity, hypercyclicity, supercyclicity, Li-Yorke chaos, and shadowing. They gave the complete characterizations of weighted shifts that satisfy various notions of expansivity in the case where the Banach space is $C_{0}$ or $l_{p}(1\leq p<\infty)$. The concept of expansivity and shadowing property for composition operator on $L^{p}$-spaces $(1\leq p<\infty)$ has been investigated in \cite{MAI1}. In \cite{Bam2} and \cite{Raj} study the notion of expansivity and positive expansivity for Orlicz-spaces and Lorentz spaces respectively are. As the Orlicz-Lorentz spaces provide a common generalization of Orlicz and Lorentz spaces, it is natural to broaden the study of composition operators to a more general class.\\
Motivated by the results of \cite{MAI1}, in this paper we investigated the characterization of expansivity and positive expansivity for composition operators on Orlicz-Lorentz space. The paper is structured as follows: Section 1 is introductory and we cite certain definitions and results which will be used throughout this paper. In Section 2, we provide necessary and sufficient conditions for expansivity, positive expansivity and uniform expansivity of composition operators  in the setting of Orlicz-Lorentz spaces.\\
Now, we recall some basic facts about the Orlicz-Lorentz space which is useful throughout  this paper. For more details on Orlicz-Lorentz spaces  we refer to (\cite{BS}, \cite{ROL}, \cite{Rao}) and references therein.\\
Let $(X,\mathbb{A},\mu)$ be a measure space with positive measure and $\mathbb{L}^{0}$ represent the space of all equivalence measurable functions on $X$ which are identified as $\mu$-a.e.
We  now define the distribution function $\mu_{g}$ of $g\in L^{0}$ on $(0,\infty)$ as
$$\mu_{g}(\lambda)=\mu\{x \in X:|g(x)|>\lambda\},$$
and the non-increasing rearrangement of $g$ on $(0,\infty)$ is
$$g^{*}(t)=\inf\{\lambda>0:\mu_{g}(\lambda)\leq t\}=\sup\{\lambda>0:\mu_{g}(\lambda)>t\}.$$
A function $\varphi:[0,\infty) \to [0,\infty]$ is an Orlicz function if it is convex function with $\varphi(0)=0$ and $\varphi(s) \to \infty~\mbox{as}~s \to \infty$ such that $\varphi(s)<\infty$ for some $0<s<\infty$. Let $K=[0,\mu(X)]$. Let $h:K \to (0,\infty)$ be locally integrable and non-increasing function with respect to Lebesgue measure which is known as weight function. 
For a given $\varphi$ and $h$, the space
$$\mathbb{L}^{\varphi,h}(\mu)=\{ g\in \mathbb{L}^{0}:I_{\varphi,h}(\lambda g)<\infty~ \mbox{for some}~\lambda>0\},$$
where $$I_{\varphi,h}(\lambda g)=\int_{K}\varphi(\lambda g^{*}(t))h(t)dt.$$ is known as Orlicz-Lorentz space.
Now, we define the norm of $g$ as
 $$||g||_{\varphi,h}=\inf\{\lambda>0:I_{\varphi,h}(g/\lambda)\leq 1\}.$$ 
The space $\mathbb{L}^{\varphi,h}(\mu)$ with the above norm is a Banach space. Moreover, if $A\in \mathbb{A}$ with $0<\mu(A)<\infty$, then $||\chi_{A}||_{\varphi,h}=\frac{1}{\varphi^{-1}\(\frac{1}{\int_{0}^{\mu(\tau^{-n}(A))}h(t)dt}\)}$. If $w=1$, then $\mathbb{L}^{\varphi,h}$ represents the Orlicz space $\mathbb{L}^{\varphi}(\mu)$ and when $\varphi(t)=t$, then it represents the Lorentz space. $\mathbb{L}^{w}(\mu)$. %In case $w=1$, then using[L maligranda 13], we see that
%$$\rho_{\phi}(g)=\int_{I}\phi(g^{*}(t))dt=\int_{X}\phi(|g|)d\mu.$$

First of all we state various growth conditions in $\varphi$, the Orlicz function $\varphi$ satisfies the $\Delta_{2}$-condition(for small u; for large u), if there is a positive constant $M$(a positive constant M and $s_{o}>0$ with $\varphi(s_{o})>0$; a positive constant M and $s_{o}>0$ with $\varphi(s_{o})<\infty$ ) such that
\begin{equation}\label{eqdelta}
	\varphi(2s)\leq M\varphi(s), ~\mbox{for all}~ s>0.
\end{equation}
We supposed that $\varphi$ is left continuous at $b_{\phi}$, where
$$b_{\phi}=\sup\{u>o:\phi(u)<\infty\}.$$
Also, define $$a_{\phi}=\inf\{u>0:\phi(u)>0\}.$$
Now, we recall the convergence of sequences in Orlicz-Lorentz spaces.
\begin{lemma}\label{lemma1}
Let $\{g_{n}\}_{n\geq 1}$ be a sequence from $\mathbb{L}^{\varphi,h}(\mu)$ and $g \in \mathbb{L}^{\varphi,h}(\mu)$. Then
\begin{itemize}
\item[(a)] If $||g_{n}-g||_{\varphi,h} \to 0$, then $I_{\varphi,h}(g_{n}) \to I_{\varphi,h}(g) $. converse holds only if $\varphi$ satisfies the $\Delta_{2}$ conditions for large $(s>0)$.
\item[(b)] If $\varphi$ satisfies the $\Delta_{2}$ conditions for large $(s>0)$ with $I_{\varphi,h}(g_{n}) \to I_{\varphi,h}(g) $ as $n \to \infty$ and $g_{n}\to g$ a.e., then $g_{n} \to g$ in norm.
\end{itemize}  
\end{lemma}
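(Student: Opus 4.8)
The plan is to regard both items as the Orlicz--Lorentz counterparts of the familiar correspondence between modular and norm convergence in Orlicz spaces, the extra ingredient being that the modular now runs over the decreasing rearrangement. The first step is to observe that $I_{\varphi,h}$ is a \emph{convex} modular on $\mathbb{L}^{\varphi,h}(\mu)$: since $(\alpha g)^{*}=\alpha g^{*}$ for $\alpha\ge0$ and $\varphi$ is convex with $\varphi(0)=0$ one gets $I_{\varphi,h}(\alpha g)\le\alpha I_{\varphi,h}(g)$ for $0\le\alpha\le1$, and combining the Hardy--Littlewood rearrangement inequality $\int_{0}^{t}(f+g)^{*}\le\int_{0}^{t}f^{*}+\int_{0}^{t}g^{*}$ with the Hardy--Littlewood--P\'olya principle and the monotonicity of $h$ gives $I_{\varphi,h}\big(\tfrac12 f+\tfrac12 g\big)\le\tfrac12 I_{\varphi,h}(f)+\tfrac12 I_{\varphi,h}(g)$. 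Left-continuity of $\varphi$ at $b_{\varphi}$ makes $I_{\varphi,h}$ left-continuous along dilations, so the Luxemburg-norm formalism yields $I_{\varphi,h}(g)\le\|g\|_{\varphi,h}$ whenever $\|g\|_{\varphi,h}\le1$; in particular norm-null sequences are modular-null.

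For the first implication in (a) I would combine a convexity split with lower semicontinuity of the modular. Fixing $\lambda\in(0,1)$ and writing $g_{n}=(1-\lambda)\frac{g}{1-\lambda}+\lambda\frac{g_{n}-g}{\lambda}$, convexity together with $I_{\varphi,h}\big(\frac{g_{n}-g}{\lambda}\big)\to0$ gives $\limsup_{n}I_{\varphi,h}(g_{n})\le(1-\lambda)I_{\varphi,h}\big(\frac{g}{1-\lambda}\big)$, and then $\lambda\downarrow0$ (monotone convergence, left-continuity of $\varphi$) yields $\limsup_{n}I_{\varphi,h}(g_{n})\le I_{\varphi,h}(g)$. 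For the reverse inequality I would use that $I_{\varphi,h}$ is lower semicontinuous for a.e.\ convergence: Fatou applied to the distribution functions gives $\mu_{g}(\lambda)\le\liminf_{n}\mu_{g_{n}}(\lambda)$ for each $\lambda$, hence $g^{*}(t)\le\liminf_{n}g_{n}^{*}(t)$ on $K$, and a second application of Fatou to $\varphi(g_{n}^{*})h$ gives $I_{\varphi,h}(g)\le\liminf_{n}I_{\varphi,h}(g_{n})$; since norm convergence passes to an a.e.\ convergent subsequence this suffices. The converse in (a), read in the usual way as ``$I_{\varphi,h}(g_{n}-g)\to0$ implies $\|g_{n}-g\|_{\varphi,h}\to0$'', is where $\Delta_{2}$ for large $s$ enters: it supplies the reverse estimate $I_{\varphi,h}(f/\delta)\le C(\delta)I_{\varphi,h}(f)+\eta$ with $\eta$ harmlessly small (controlled by the measure of the part of $K$ on which $f^{*}$ stays below the threshold $s_{0}$), so a subsequence with $\|g_{n_{k}}-g\|_{\varphi,h}\ge\delta$, hence $I_{\varphi,h}((g_{n_{k}}-g)/\delta)\ge1$, would contradict $I_{\varphi,h}(g_{n}-g)\to0$; the necessity of $\Delta_{2}$ is the standard indicator-function construction, which I would only sketch.

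For (b) I would run a Brezis--Lieb / Fatou argument. With $\Delta_{2}$ for large $s$, convexity of $\varphi$ gives a pointwise domination $\varphi\big((g_{n}-g)^{*}(t)\big)\le A\big(\varphi(g_{n}^{*}(t/2))+\varphi(g^{*}(t/2))\big)+B(t)$ with $B$ integrable against $h$, so the functions $w_{n}(t):=A\big(\varphi(g_{n}^{*}(t/2))+\varphi(g^{*}(t/2))\big)-\varphi\big((g_{n}-g)^{*}(t)\big)$ are eventually non-negative and, once one knows $(g_{n}-g)^{*}\to0$ (equivalently $g_{n}^{*}\to g^{*}$) a.e.\ on $K$, converge a.e.\ to $2A\varphi(g^{*}(t/2))$. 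Feeding this into Fatou, changing variable to absorb the $t/2$ (licit because $h$ is non-increasing), and using $I_{\varphi,h}(g_{n})\to I_{\varphi,h}(g)$ forces $\limsup_{n}\int_{K}\varphi\big((g_{n}-g)^{*}(t)\big)h(t)\,dt\le0$, i.e.\ $I_{\varphi,h}(g_{n}-g)\to0$, and the converse of (a) then gives $\|g_{n}-g\|_{\varphi,h}\to0$.

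The step I expect to be the real obstacle is precisely the passage, inside (b), from ``$g_{n}\to g$ a.e.'' to ``$(g_{n}-g)^{*}\to0$ a.e.\ on $K$'' (equivalently, to convergence in measure, i.e.\ absence of escaping mass): on an infinite measure space this fails in general, and here it must be deduced by combining the hypothesis $I_{\varphi,h}(g_{n})\to I_{\varphi,h}(g)<\infty$ (which keeps the rearrangements tight at infinity) with the a.e.\ convergence (which controls the local behaviour), again using the monotonicity of $h$ and left-continuity of $\varphi$. Everything else --- the convexity split, the lower semicontinuity, the $\Delta_{2}$-reversal and the Fatou bookkeeping --- is a routine transcription of the Orlicz-space arguments (cf.\ \cite{Bam2}) once the Hardy--Littlewood rearrangement inequality and the Hardy--Littlewood--P\'olya principle are at hand.
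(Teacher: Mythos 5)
The paper never proves this lemma: it is stated as a recalled fact from the Orlicz--Lorentz literature and the text moves straight on, so there is no proof of record to compare your argument against. Judged on its own merits, your outline follows the standard modular-versus-norm convergence scheme, but it has two genuine gaps. First, in (a) the dilation step is broken: from $\limsup_{n}I_{\varphi,h}(g_{n})\le(1-\lambda)I_{\varphi,h}\bigl(g/(1-\lambda)\bigr)$ you let $\lambda\downarrow 0$ and invoke monotone convergence, but this requires $I_{\varphi,h}\bigl(g/(1-\lambda)\bigr)<\infty$ for some $\lambda>0$, which is exactly what fails when $\varphi$ does not satisfy $\Delta_{2}$. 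Indeed the forward implication of (a), read literally, is false without $\Delta_{2}$: take $g$ with $I_{\varphi,h}(g)\le 1$ but $I_{\varphi,h}(cg)=\infty$ for every $c>1$ (possible for, e.g., exponential $\varphi$) and set $g_{n}=(1+\tfrac1n)g$; then $||g_{n}-g||_{\varphi,h}\to 0$ while $I_{\varphi,h}(g_{n})=\infty$ for all $n$. What is true unconditionally, and what the lemma is presumably meant to record, is that $||g_{n}-g||_{\varphi,h}\to 0$ implies $I_{\varphi,h}\bigl(\lambda(g_{n}-g)\bigr)\to 0$ for every $\lambda>0$; that follows in one line from convexity and $I_{\varphi,h}(f)\le ||f||_{\varphi,h}$ on the unit ball, with no dilation limit needed. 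You should either prove that corrected statement or make the $\Delta_{2}$ hypothesis explicit in the first half of (a).

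Second, in (b) you correctly identify the crux --- passing from $g_{n}\to g$ $\mu$-a.e.\ to $(g_{n}-g)^{*}\to 0$ a.e.\ on $K$ so that the Brezis--Lieb/Fatou bookkeeping can be run on the rearrangement side --- but you only announce it as ``the real obstacle'' without closing it. This is not a routine transcription of the Orlicz-space argument: the modular $I_{\varphi,h}$ sees only $g_{n}^{*}$, a.e.\ convergence of $g_{n}$ yields merely $g^{*}(t)\le\liminf_{n}g_{n}^{*}(t)$ via Fatou on distribution functions, and on an infinite measure space mass can escape so that $(g_{n}-g)^{*}$ need not tend to $0$ without using the hypothesis $I_{\varphi,h}(g_{n})\to I_{\varphi,h}(g)<\infty$ in an essential way (for instance through a Brezis--Lieb argument on the underlying measure space followed by a transfer to rearrangements, or through convergence in measure plus uniform integrability of $\{\varphi(g_{n}^{*})h\}$). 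As it stands the proof of (b) is a plan, not a proof; this step must be supplied before the lemma can be considered established.
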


Throughout in this paper, we have $(X,\mathbb{A},\mu)$ be a measure space with measure positive. Let $\tau:X \to X$ be a measurable non-singular transformation i.e., $\tau^{-1}\in \mathbb{A},$ if $A \in \mathbb{A}$. A measurable non-singular transformation induces a bounded and continuous composition operator on $L^{\varphi,h}(\mu)$ if and only if there exist $M>0$ such that
\begin{equation}\label{eq1}
\mu \circ \tau^{-1}(A)\leq M \mu(A),~\forall~ A \in \mathbb{A}.
\end{equation} 

Since the equation(\ref{eq1}) holds for all $M\geq 1$. Then, we have 
$$(C_{\tau}g)^{*}(t)\leq g^{*}(t/M),~\mbox{for each}~t\geq 0.$$

\section{Expansivity for composition operator}
In this section, we first basic results from (NC expansivity) which will be useful throughout in this section.
\begin{proposition}\label{prop1}
Let $X$ be any Banach space and $T$ be an operator on $X$. Then
\begin{itemize}
\item[(a)] For each $0\neq x\in X,$ T is positively expansive if and only if $\displaystyle\sup_{n \in \mathbb{N}}||T^{n}x||=\infty.$
\item[(b)] T is uniformly positively expansive if and only if $\displaystyle\lim_{n \to \infty} ||T^{n}x||=\infty$, uniformly on $S_{X}$, where $S_{X}=\{x \in X:||x||=1\}$.\\
If $T$ is invertible, then
\item[(c)] For each $0\neq x\in X,$ T is expansive if and only if $\displaystyle\sup_{n \in \mathbb{Z}}||T^{n}x||=\infty.$
\item[(d)] T is uniformly expansive if and only if $S_{X}=A\cup B,$ where $\displaystyle\lim_{n \to \infty}||T^{n}x||=\infty$ uniformly on $A$ and $\displaystyle\lim_{n \to \infty}||T^{n}x||=\infty$, uniformly on $B$.
\end{itemize}
\end{proposition}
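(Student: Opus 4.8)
The plan is to reduce Proposition~\ref{prop1} to known general facts about the behaviour of orbits of a bounded linear operator, since the statement makes no reference to composition operators or to the Orlicz--Lorentz structure; it is a purely abstract Banach-space dichotomy. The four items all follow the same template: express each dynamical notion (positive expansivity, uniform positive expansivity, expansivity, uniform expansivity) in its $\epsilon$--$\delta$ form and translate it, using linearity, into a statement about the sequence of norms $\|T^{n}x\|$ on the unit sphere $S_{X}$.

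First I would record the definitions. Positive expansivity of $T$ means there is a constant $c>0$ such that for every pair $x\neq y$ there is some $n\in\NN$ with $\|T^{n}x-T^{n}y\|>c$; by linearity, writing $z=x-y$, this is equivalent to: for every $z\neq 0$, $\sup_{n\in\NN}\|T^{n}z\|>c$. Rescaling $z$ (again using linearity and the fact that $T$ is homogeneous) shows the threshold $c$ is irrelevant once we only ask for \emph{some} threshold, so the condition collapses to $\sup_{n\in\NN}\|T^{n}z\|=\infty$ for every $z\neq 0$. This proves (a). For (c), when $T$ is invertible the same argument runs with $n$ ranging over $\ZZ$ instead of $\NN$, using $T^{-1}$ to make sense of negative powers; nonsingularity/invertibility is exactly what is needed so that $T^{n}$ is defined for $n<0$.

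Next, the uniform versions. Uniform positive expansivity asks that for every $\epsilon>0$ there is $N$ such that for all $x\neq y$ there is $n\le N$ with $\|T^{n}x-T^{n}y\|\ge\epsilon\|x-y\|$; normalising $x-y$ to lie on $S_{X}$, this says: for every $\epsilon>0$ there is $N$ with $\max_{n\le N}\|T^{n}z\|\ge\epsilon$ for all $z\in S_{X}$. I would then argue that this uniform-in-$z$, eventually-large condition is equivalent to $\|T^{n}z\|\to\infty$ uniformly on $S_{X}$: one direction is immediate, and for the other one iterates the ``escape past level $\epsilon$ within $N$ steps'' property together with the bound $\|T^{n}z\|\ge \|T^{n-k}\|^{-1}\|T^{k}z\|$... actually more cleanly, one uses that $\inf_{z\in S_X}\max_{k\le N}\|T^k z\|\ge \epsilon$ forces, by submultiplicativity applied to blocks of length $N$, geometric growth of $\inf_{z\in S_X}\|T^{mN}z\|$, hence uniform divergence. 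This gives (b), and (d) is the two-sided analogue: $S_{X}$ splits into the set $A$ of directions whose forward orbit escapes uniformly and the set $B$ of directions whose backward orbit escapes uniformly, and uniform expansivity is precisely the assertion that such a splitting exists with both pieces uniform (note the statement of (d) as printed says ``$\lim\|T^nx\|=\infty$ uniformly on $B$'' where it should read $T^{-n}$; I would phrase it with $T^{-n}$ on $B$).

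The main obstacle is the uniform equivalences in (b) and (d): passing from the ``bounded-time uniform escape past a fixed level'' formulation to genuine uniform divergence to $\infty$ is not a triviality and needs the block/submultiplicativity argument sketched above (equivalently, a compactness-free iteration, since $S_{X}$ need not be compact in infinite dimensions). Everything else is bookkeeping with the definitions and the homogeneity of $T$. I would therefore spend the bulk of the write-up on (b), then obtain (d) by applying (b) separately to the forward dynamics on $A$ and, via invertibility, to the backward dynamics on $B$.
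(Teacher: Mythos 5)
This proposition is stated in the paper without any proof: it is imported verbatim (typo in (d) included) from \cite{NC2018}, so there is no in-paper argument to compare yours against. Your sketch essentially reconstructs the standard proof from that reference. Items (a) and (c) are fine as written: homogeneity of $T$ makes the expansivity constant irrelevant, and that is the whole content. For (b), the block/iteration argument you settle on is the right mechanism, but watch the direction of the norm inequality: what the interpolation step needs is $\|T^{n}z\|\geq\|T^{k-n}\|^{-1}\|T^{k}z\|$ for $k\geq n$ (a lower bound on $\|T^{n}z\|$ in terms of a \emph{later} iterate known to be large), whereas the inequality you first wrote, $\|T^{n}z\|\geq\|T^{n-k}\|^{-1}\|T^{k}z\|$ with $k\leq n$, is false in general. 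With escape times $k_{1}+\dots+k_{m}\leq mN$ giving $\|T^{k_{1}+\dots+k_{m}}z\|\geq 2^{m}$, the correct inequality (with the constant $\max_{0\leq j\leq N}\|T^{j}\|$) fills in the intermediate times and yields uniform divergence on $S_{X}$.

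The one place your plan is genuinely too quick is the forward implication of (d). You cannot literally ``apply (b) separately to the forward dynamics on $A$ and the backward dynamics on $B$,'' because $A$ and $B$ must first be constructed and neither is $T$-invariant, so (b) does not apply to them as stated. The standard construction takes $n_{0}$ witnessing uniform expansivity, $A=\{z\in S_{X}:\|T^{n_{0}}z\|\geq 2\}$ and $B=S_{X}\setminus A$; the step that needs an argument is that for $z\in A$ the \emph{forward} alternative persists at every later application of the dichotomy. Applying uniform expansivity to $w=T^{n_{0}}z/\|T^{n_{0}}z\|$, the backward alternative $\|T^{-n_{0}}w\|\geq 2$ is impossible because $\|T^{-n_{0}}w\|=1/\|T^{n_{0}}z\|\leq 1/2$, so necessarily $\|T^{2n_{0}}z\|\geq 2\|T^{n_{0}}z\|\geq 4$, and inductively $\|T^{mn_{0}}z\|\geq 2^{m}$; the same argument runs backwards on $B$. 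Once this persistence step is supplied, the rest is the interpolation from (b), and the converse direction of (d) is immediate. You are right that the printed statement of (d) should read $T^{-n}$ on $B$; that typo also propagates into the proof of part (d) of the paper's main theorem.
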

By setting $\mathbb{A}^{+}=\{A\in \mathbb{A}:0<\mu(A)<\infty\}$ and supposing that the composition operator $C_{\tau}$ on $L^{\varphi,h}(\mu)$, we rewrite the definition \cite{NC2018} and \cite{MAI1} for composition operators on Orlicz-Lorentz spaces $L^{\varphi,h}(\mu)$ in which $\varphi$ satisfies the $\Delta_{2}$ condition for large $s>0$.
\begin{definition}
The system $(X,\mathbb{A},\mu,\tau,\C_{\tau})$ is said to be a composition dynamical system if 
\begin{itemize}
\item[(a)] $(X,\mathbb{A},\mu)$ be a $\sigma$-finite measure space.
\item[(b)] $\tau:X\to X$ be bi-measurable transformation which is injective also.
\item[(c)] there is positive constant $M$ such that 
$$\mu(\tau^{-1}(A))\leq M\mu(A),~\forall~A\in \mathbb{A}.$$
\item[(d)] $C_{\tau}:L^{\varphi,h}(\mu) \to L^{\varphi,h}(\mu)$ is the composition operator induced by $\tau$ i.e.,
$$C_{\tau}g=g \circ \tau,~for~ g\in L^{\varphi,h}(\mu).$$
\end{itemize}
\end{definition} 
The condition (c) guarantees that the composition operator on $L^{\varphi,h}(\mu)$ is bounded. Moreover, if $\tau$ is bijective and $\tau^{-1}$ satisfies the condition (c) then $C_{\tau^{-1}}$ is also bounded linear operator and $C_{\tau}^{-1}=\C_{\tau^{-1}}.$ For complete details on composition operator interesting reader can see (rk singh).\\
AS it turns out from the literature for every $A\in \mathbb{A}$ with $0<\mu(A)<\infty$, we have $||\chi_{A}||_{p}=(\mu(A))^{\frac{1}{p}}$ and $||\chi_{A}||_{\varphi,h}=\frac{1}{\varphi^{-1}\(\frac{1}{\int_{0}^{\mu(A)}h(t)dt}\)}$, we can write the definition of \cite{MAI1} that are defined for $L^{p}$-space in the same context to Orlicz-Lorentz space. 
\begin{definition}
Let $(X,\mathbb{A},\mu)$ be a measure space and $\tau$ be a measurable non-singular transformation which is also invertible. Then the measurable set $W$ of $X$ is said to be wandering set for $\tau$ if $\{\tau^{-n}(w)\}_{n \in \mathbb{Z}}$ are pairwise disjoint.
\end{definition}

%\begin{definition}
%By taking above assumption of $\tau$. The Quadruple $(X,\mathbb{A},\mu)$ is known as 
%\begin{itemize}
%\item[(a)] a dissipative generated by $W$, if $X$ is equal to union of pairwise disjoint set $\tau^{k}(W)$ for some $W\in \mathbb{A}$ and for all $k\in \mathbb{Z}$.
%\item[(b)] a dissipative system of bounded distortion generated by $W$, if there is positive number $k$ such that 
%$$\frac{1}{k}\frac{||C_{\tau}^{k}\chi_{W}||_{\varphi,h}}{||\chi_{W}||_{\varphi,h}}\leq \frac{||C_{\tau}^{k}\chi_{A}||_{\varphi,h}}{||\chi_{A}||_{\varphi,h}}\leq k\frac{||C_{\tau}^{k}\chi_{W}||_{\varphi,h}}{||\chi_{W}||_{\varphi,h}}, $$
%\end{itemize}
%$\forall k \in \mathbb{Z}$ and $A(W)=\{A\cap W:A\in \mathbb{A}\}.$
%\end{definition}

%{\bf Note:-} If we replace $||.||_{\varphi,h}$ with $||.||_{p}$ then we will have bounded distortion property in $L^{p}$ -spaces.

%\begin{definition}
%Let $(X,\mathbb{A},\mu,\tau,C_{\tau})$ be composition dynamical system. Then $(X,\mathbb{A},\mu,\tau,C_{\tau})$ is said to be dissipative composition dynamical system and bounded distortion generated by $W$, if $(X,\mathbb{A},\mu,\tau)$ be a dissipative system generated by $W$ and if $(X,\mathbb{A},\mu,\tau)$ be dissipative system of bounded distortion generated by $W$.  
%\end{definition}

\begin{theorem}
Let $(X,\mathbb{A},\mu,\tau,C_{\tau})$ be composition dynamical system. Then
\begin{itemize}
\item[(a)] The operator $C_{\tau}$ is expansive if and only if for all $A \in \mathbb{A}$ with $0<\mu(A)<\infty$, we have 
$$\displaystyle\inf_{n \in \mathbb{Z}}\varphi^{-1}\(\frac{1}{\int_{0}^{\mu(\tau^{-n}(A))}h(t)dt}\)=0.$$
\item[(b)] The operator $C_{\tau}$ is positively expansive if and only if 
$$\displaystyle\inf_{n\in \mathbb{N}}\varphi^{-1}\(\frac{1}{\int_{0}^{\mu(\tau^{-n}(A))}h(t)dt}\)=0,$$
where $A\in \mathbb{A}$ and $0<\mu(A)<\infty$.
\item[(c)] If $\varphi$ satisfies the $\delta_{2}$ condition for large $s>0$, then the $C_{\tau}$ is uniformly positively expansive if and only if 
$$\displaystyle\lim_{n \to \infty}\frac{\varphi^{-1}\(\frac{1}{\int_{0}^{\mu(A)}h(t)dt}\)}{\varphi^{-1}\(\frac{1}{\int_{0}^{\mu(\tau^{-n}(A))}h(t)dt}\)}=\infty,$$ 
for all $A \in \mathbb{A}$ with $0<\mu(A)<\infty$.
\item[(d)] If $\varphi$ satisfies the $\Delta_{2}$ condition for $s>0$, then the composition operator $C_{\tau}$ is uniformly expansive if and only if $\mathbb{A}^{+}$ can be decomposed as $\mathbb{A}^{+}=\mathbb{A}_{B}^{+} \cup \mathbb{A}_{C}^{+}$ such that 
$$ \displaystyle\lim_{n \to \infty}\frac{\varphi^{-1}\(\frac{1}{\int_{0}{\mu(A)}h(t)dt}\)}{\varphi^{-1}\(\frac{1}{\int_{0}^{\mu(\tau^{n}(A))}h(t)dt}\)}=\infty,~\mbox{uniformly on}~\mathbb{A}_{B}^{+},$$ and
$$ \displaystyle\lim_{n \to \infty}\frac{\varphi^{-1}\(\frac{1}{\int_{0}{\mu(A)}h(t)dt}\)}{\varphi^{-1}\(\frac{1}{\int_{0}^{\mu(\tau^{-n}(A))}h(t)dt}\)}=\infty,~\mbox{uniformly on}~\mathbb{A}_{C}^{+}.$$
\end{itemize}
\end{theorem}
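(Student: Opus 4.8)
The plan is to reduce every assertion to the behaviour of $C_\tau$ on the indicators $\chi_A$, $A\in\mathbb{A}^+$, and then invoke Proposition \ref{prop1}, following the scheme of \cite{MAI1}; in (a), (d) I use that $C_\tau$ is invertible with $C_\tau^{-1}=C_{\tau^{-1}}$. Writing $u(B)=\varphi^{-1}\!\big(1/\!\int_0^{\mu(B)}h(t)\,dt\big)$, so that $\|\chi_B\|_{\varphi,h}=1/u(B)$, and using that $\tau$ is injective (hence $C_\tau^{n}\chi_A=\chi_{\tau^{-n}(A)}$ and, when $C_\tau$ is invertible, $C_\tau^{-n}\chi_A=\chi_{\tau^{n}(A)}$), I would first record
\[
\|C_\tau^{n}\chi_A\|_{\varphi,h}=\frac1{u(\tau^{-n}(A))},\qquad
\Big\|C_\tau^{\pm n}\tfrac{\chi_A}{\|\chi_A\|_{\varphi,h}}\Big\|_{\varphi,h}=\frac{u(A)}{u(\tau^{\mp n}(A))}.
\]
This identifies the four displayed conditions with, respectively, $\sup_{n\in\mathbb{Z}}\|C_\tau^{n}\chi_A\|=\infty$, $\sup_{n\in\mathbb{N}}\|C_\tau^{n}\chi_A\|=\infty$, $\|C_\tau^{n}(\chi_A/\|\chi_A\|)\|\to\infty$, and (for (d)) the forward/backward versions, asked for every $A\in\mathbb{A}^+$ — uniformly over the relevant subfamily in (c)--(d), which is how I would read the condition in (c) as well, to match (d).

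For (a) and (b) I would invoke Proposition \ref{prop1}(c),(a). The ``only if'' direction is just evaluation on $x=\chi_A\in\mathbb{L}^{\varphi,h}(\mu)$ (which lies in the space because $h$ is locally integrable), giving the claimed vanishing infimum. For ``if'', given $0\neq g\in\mathbb{L}^{\varphi,h}(\mu)$ I would use $\sigma$-finiteness to produce $A\in\mathbb{A}^+$ and $c>0$ with $|g|\ge c\,\chi_A$ a.e.; then $|C_\tau^{n}g|=|g|\circ\tau^{n}\ge c\,\chi_{\tau^{-n}(A)}$, and monotonicity of $\|\cdot\|_{\varphi,h}$ (from $f_1^{*}\le f_2^{*}$ when $|f_1|\le|f_2|$, $\varphi$ non-decreasing) gives $\|C_\tau^{n}g\|\ge c\,\|C_\tau^{n}\chi_A\|$, so the hypothesis for this $A$ forces the supremum over $\mathbb{Z}$ (resp. $\mathbb{N}$) to be infinite; Proposition \ref{prop1} then concludes.

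For (c) and (d) the ``only if'' part would again be restriction of the uniform convergence on $S_X$ from Proposition \ref{prop1}(b),(d) to the normalised indicators $\chi_A/\|\chi_A\|$. The real work is the ``if'' direction — passing from indicators to \emph{uniform} control on all of $S_X$ — and this is where $\Delta_2$ enters. Fixing $R\ge1$, under $\Delta_2$ one has $I_{\varphi,h}(g)=1$ for $g\in S_X$ (a standard consequence, see Lemma \ref{lemma1}), and $\|C_\tau^{n}g\|_{\varphi,h}>R$ once $I_{\varphi,h}(C_\tau^{n}g/R)>1$. Putting $A^{\lambda}=\{\,|g|>\lambda\,\}$, so that $\{\,|C_\tau^{n}g|>\lambda\,\}=\tau^{-n}(A^{\lambda})$, the layer-cake formula together with $\int_0^{\mu(B)}h(t)\,dt=1/\varphi(u(B))$ gives
\[
1=I_{\varphi,h}(g)=\int_0^{\infty}\varphi'(\nu)\,\frac{d\nu}{\varphi(u(A^{\nu}))},\qquad
I_{\varphi,h}\!\Big(\frac{C_\tau^{n}g}{R}\Big)=\int_0^{\infty}\varphi'(\nu)\,\frac{d\nu}{\varphi\big(u(\tau^{-n}(A^{R\nu}))\big)}.
\]
For $\varepsilon\in(0,1)$ the uniform hypothesis yields $N$ with $u(\tau^{-n}(B))<\varepsilon\,u(B)$ for all $B\in\mathbb{A}^+$ and $n\ge N$; since $\varphi(\varepsilon t)\le\varepsilon\varphi(t)$ by convexity, the second integral then exceeds, after the substitution $\sigma=R\nu$, the quantity $\varepsilon^{-1}\int_0^{\infty}\varphi'(\sigma/R)R^{-1}\varphi(u(A^{\sigma}))^{-1}\,d\sigma$. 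Iterating $\varphi(2s)\le M\varphi(s)$ (which bounds $\varphi(\sigma)/\varphi(\sigma/R)$) and using convexity produces a constant $c(R)>0$ with $\varphi'(\sigma/R)/R\ge c(R)\varphi'(\sigma)$ for all $\sigma$, whence $I_{\varphi,h}(C_\tau^{n}g/R)>c(R)/\varepsilon$ for $n\ge N$; taking $\varepsilon<c(R)$ yields $\|C_\tau^{n}g\|>R$ for $n\ge N$, uniformly on $S_X$, so $C_\tau$ is uniformly positively expansive by Proposition \ref{prop1}(b). For (d), I would repeat the estimate with the forward ratio on $\{g\in S_X:A^{\lambda}\in\mathbb{A}_C^+\text{ for all }\lambda\}$ and, using $C_\tau^{-n}=C_{\tau^{-1}}^{n}$ and $\{\,|C_\tau^{-n}g|>\lambda\,\}=\tau^{n}(A^{\lambda})$, with the backward ratio on $\{g\in S_X:A^{\lambda}\in\mathbb{A}_B^+\text{ for all }\lambda\}$, and then check that $\mathbb{A}^+=\mathbb{A}_B^+\cup\mathbb{A}_C^+$ makes these two families cover $S_X$ up to null functions, giving the splitting $S_X=\mathcal{A}\cup\mathcal{B}$ of Proposition \ref{prop1}(d).

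The main obstacle is the ``if'' direction of (c) and (d): the layer-cake identity turns the desired uniform bound on $\|C_\tau^{n}g\|$ over $S_X$ into a level-by-level comparison of the modulars of $g$ and of $C_\tau^{n}g/R$, and $\Delta_2$ is exactly what is needed to absorb the dilation by $R$ (it forces $\varphi$ to grow at most polynomially, so that $\varphi'(\sigma/R)/R\ge c(R)\varphi'(\sigma)$). For (d) there is the extra, somewhat delicate, point of checking that the set-level decomposition $\mathbb{A}^+=\mathbb{A}_B^+\cup\mathbb{A}_C^+$ genuinely descends to a decomposition of $S_X$, i.e. that a single unit vector cannot have some level sets forward-expanding and others backward-expanding; this should follow from the $\tau^{\pm1}$-quasi-invariance of the two families.
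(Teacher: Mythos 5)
Your treatment of (a) and (b) matches the paper's in substance: necessity is evaluation on indicators, and sufficiency bounds $|g|$ from below by $c\,\chi_A$ for some $A\in\mathbb{A}^{+}$ (the paper does this through a modular computation with $A^{\epsilon}=\{|g|>\epsilon\}$, you through monotonicity of the rearrangement norm; since $\sigma$-finiteness alone does not make $\{|g|>\epsilon\}$ of finite measure, your intersecting with an exhausting set is in fact the more careful formulation). For (c) your route is genuinely different. The paper goes through density of simple functions: it proves the uniform lower bound termwise for $g=\sum_i\alpha_i\chi_{A_i}$ and then approximates a general unit vector, a step that as written requires a uniform constant in $\|\sum_i\alpha_i\chi_{B_i}\|_{\varphi,h}\ge a\sum_i|\alpha_i|\,\|\chi_{B_i}\|_{\varphi,h}$ and a somewhat loose limiting argument. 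You instead work directly with an arbitrary $g\in S_X$ via the layer-cake representation of the modular, pushing the uniform hypothesis through level by level and using convexity ($\varphi(\varepsilon t)\le\varepsilon\varphi(t)$) together with $\Delta_2$ to absorb the dilation by $R$ via $\varphi'(\sigma/R)/R\ge c(R)\varphi'(\sigma)$. This avoids the density step entirely and, modulo the usual caveats ($\Delta_2$ only for large $s$, and the identification $\|g\|_{\varphi,h}=1\Leftrightarrow I_{\varphi,h}(g)=1$), is a self-contained and arguably cleaner argument than the one in the paper.

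The genuine gap is in the ``if'' direction of (d). The two classes you propose, $\{g\in S_X: A^{\lambda}\in\mathbb{A}_B^{+}\ \text{for all }\lambda\}$ and $\{g\in S_X: A^{\lambda}\in\mathbb{A}_C^{+}\ \text{for all }\lambda\}$, do not cover $S_X$: the level sets $A^{\lambda}$ of a single unit vector form a nested family, and nothing prevents $A^{\lambda_1}$ from lying only in $\mathbb{A}_B^{+}$ while $A^{\lambda_2}$ lies only in $\mathbb{A}_C^{+}$ (take $g=\alpha\chi_{A_1}+\beta\chi_{A_2}$ with $A_1,A_2$ disjoint, one in each family and neither in both; the sublevel sets then switch family as $\lambda$ crosses $|\alpha|\wedge|\beta|$). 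Quasi-invariance of the two families under $\tau^{\pm1}$ does not repair this, so the decomposition of $S_X$ required by Proposition \ref{prop1}(d) is not produced. The repair — which is exactly what the paper's simple-function argument accomplishes — is a mass-splitting step: decompose $g$, or in your framework the layer-cake integral $1=\int_0^{\infty}\varphi'(\nu)\,\varphi(u(A^{\nu}))^{-1}\,d\nu$, into the contribution of levels (or atoms) belonging to $\mathbb{A}_B^{+}$ and that of levels belonging to $\mathbb{A}_C^{+}$, note that at least one of the two contributions is at least $1/2$, and run your uniform estimate on that half alone, with forward iterates in one case and backward iterates (via $C_\tau^{-n}\chi_A=\chi_{\tau^{n}(A)}$) in the other. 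The sets $B$ and $C$ of unit vectors for which the respective half of the modular is at least $1/2$ do cover $S_X$ and yield the required splitting. Without this modification your part (d) is incomplete; parts (a)--(c) stand.
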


\begin{proof}
Necessity:- Suppose $C_{\tau}$ is expansive. Then by (c) of proposition(\ref{prop1}) 
$$\displaystyle\sup_{n\in \mathbb{Z}}||C_{\tau}^{n}g||_{\varphi,h}=\infty,~\forall~g \in L^{\varphi,h}(\mu)\setminus\{0\}.$$
Taking $g \in \chi_{A},$ where $A \in \mathbb{A}$ with $0<\mu(A)<\infty.$ Then
\begin{eqnarray*}
||C_{\tau}^{n}g||_{\varphi,h} &=& ||C_{\tau}^{n}\chi_{A}||_{\varphi,h}\\
                              &=& ||\chi_{\tau^{-n}(A)}||_{\varphi,h}\\
															&=& \frac{1}{\varphi^{-1}\(\frac{1}{\int_{0}^{\mu(\tau^{-n}(A))}h(t)dt}\)}.
\end{eqnarray*}
Thus, we have
\begin{eqnarray*}
\displaystyle\sup_{n \in \mathbb{Z}}||C_{\tau}^{n}g||_{\varphi,h} &=& \displaystyle\sup_{n \in \mathbb{Z}}\frac{1}{\varphi^{-1}\(\frac{1}{\int_{0}^{\mu(\tau^{-n}(A))}h(t)dt}\)}\\
                                                                 &=& \infty
\end{eqnarray*}
i.e., $$\displaystyle\lim_{n \in \mathbb{Z}}\varphi^{-1}\(\frac{1}{\int_{0}^{\mu(\tau^{-n}(A))}h(t)dt}\)=0.$$
Sufficiency:- Suppose 
\begin{equation}\label{equ1}
\displaystyle\lim_{n \in \mathbb{Z}}\varphi^{-1}\(\frac{1}{\int_{0}^{\mu(\tau^{-n}(A))}h(t)dt}\)=0
\end{equation}
for all $A\in \mathbb{A}$ with $0<\mu(A)<\infty.$\\
Let $g \in L^{\varphi,h}(\mu)\setminus \{0\}.$ Then there exist some $\epsilon>0$ such that the set $A^{\epsilon}=\{x \in X:|\tau(x)|>\epsilon\}$ has a positive measure. Since $(X,\mathbb{A},\mu)$ be $\sigma$-finite, so  we have $0<\mu(A^{\epsilon})<\infty.$ Now, for all $n \in \mathbb{Z}$ and using the fact that $h(Mu)\leq  M h(u),$ we have 
\begin{eqnarray*}
\int_{I}\varphi\(\frac{(\epsilon\chi_{A^{\epsilon}} \circ \tau^{n})^{*}(t)}{M||C_{\tau}^{n}g||_{\varphi,h}}\)h(t)dt &\leq& \int_{0}^{\mu(\tau^{-n}(A^{\epsilon}))}\varphi\(\frac{(g\circ \tau^{n})^{*}(t)}{M||C_{\tau}^{n}g||_{\varphi,h}}\)h(t)dt \\
                                                     &\leq& \int_{0}^{\mu(\tau^{-n}(A^{\epsilon}))}\varphi\(\frac{g^{*}(t\ M)}{M||C_{\tau}^{n}g||_{\varphi,h}} \)h(t)dt\\
																										&\leq& \int_{I}\varphi\(\frac{g^{*}(u)}{||C_{\tau}^{n}g||_{\varphi,h}}\)h(t)dt \\
                                                    &\leq& 1.
\end{eqnarray*}
That means, 
\begin{eqnarray*}
\epsilon.||\chi_{A} \circ \tau^{n}||_{\varphi,h} &\leq& M||C_{\tau}^{n}g||_{\varphi,h}\\
 \epsilon.\frac{1}{\varphi^{-1}\(\frac{1}{\int_{0}^{\mu(\tau^{-n}(A))}h(t)dt}\)}&\leq& M||C_{\tau}^{n}g||_{\varphi,h}\\
\end{eqnarray*}
and so it follows from eq(\ref{equ1}), $\displaystyle\sup_{n \in \mathbb{Z}}||C_{\tau}^{n}g||_{\varphi,h}=\infty.$
As $g$ is arbitrary, by using (c) of proposition (\ref{prop1}), we obtain that $C_{\tau}$ is expansive on $L^{\varphi,h}(\mu)$. \\
In order to proof the part (b), just replaces $\mathbb{Z}$ by $\mathbb{N}$ in the above proof. \\
(c) Suppose $C_{\tau}$ is uniformly positively expansive. Then by proposition (\ref{prop1}), we have 
\begin{equation}\label{eqa}
\displaystyle\lim_{n\to \infty} ||C_{\tau}^{n}g||_{\varphi,h}=\infty,~\mbox{uniformly on}~S_{L^{\varphi,\phi}(\mu)}.
\end{equation}
where $S_{L^{\varphi,\phi}(\mu)}=\{g\in L^{\varphi,\phi}(\mu):||g||_{\varphi,h}=1\}.$\\
By setting $g=\varphi^{-1}\(\frac{1}{\int_{0}^{\mu(A)}h(t)dt}\).\chi_(A),$ for all $A\in \mathbb{A}$ with $0<\mu(A)<\infty$.\\
Then $||g||_{\varphi,h}=1$ and also,
\begin{eqnarray*}
||C_{\tau}^{n}g||_{\varphi,h} &=& \varphi^{-1}\(\frac{1}{\int_{0}^{\mu(A)}h(t)dt}\)||C_{\tau}^{n}\chi_{A}||_{\varphi,h} \\
                             &=& \varphi^{-1}\(\frac{1}{\int_{0}^{\mu(A)}h(t)dt}\)||\chi_{\tau^{-n}(A)}||_{\varphi,h} \\
                             &=& \frac{\varphi^{-1}\(\frac{1}{\int_{0}^{\mu(A)}h(t)dt}\)}{\varphi^{-1}\(\frac{1}{\int_{0}^{\mu(\tau^{-n}(A)}h(t)dt}\)}.
\end{eqnarray*}
From eq(\ref{eqa}), we see that 
$$\displaystyle\lim_{n \to \infty} \frac{\varphi^{-1}\(\frac{1}{\int_{0}^{\mu(A)}h(t)dt}\)}{\varphi^{-1}\(\frac{1}{\int_{0}^{\mu(\tau^{-n}(A)}h(t)dt}\)}=\infty,~\mbox{uniformly on the sets}~A\in \mathbb{A}^{+}.$$
Conversely, as $\varphi$ satisfies the $\Delta_{2}$ condition for large $s>0$ and simple functions are dense in $L^{\varphi,\mu}$, so it is sufficient to show that for any simple function $g\in S_{L^{\varphi,h}(\mu)}$, $\displaystyle\lim_{n \to \infty}||C_{\tau}^{n}g||_{\varphi,h}=\infty.$\\
By hypothesis, for $M>0$, there exist $N_{c}\in \mathbb{N}$ such that $\forall~A\in \mathbb{A}^{+}$, we have
$$\frac{\varphi^{-1}\(\frac{1}{\int_{0}^{\mu(A)}h(t)dt}\)}{\varphi^{-1}\(\frac{1}{\int_{0}^{\mu(\tau^{-n}(A)}h(t)dt}\)}>M,~\forall~n \geq N_{c}.$$
Let $g=\sum_{i=1}^{m}\alpha_{i}\chi_{A_{i}}\in S_{L^{\varphi,h}(\mu)}$ be a simple function, where $\alpha_{i}\in \mathbb{C}\setminus \{0\}$, $A_{i}\in \mathbb{A}^{+}$ and suppose that $\{A_{i}\}_{i=1}^{m}$ are pairwise disjoint. In view of lemma\ref{lemma1}, for all n, we have
\begin{eqnarray*}
||C_{\tau}^{n}g||_{\varphi,h} &=& ||C_{\tau}^{n} \sum_{i=1}^{m}\alpha_{i}\chi_{A_{i}} \circ \tau^{n}||_{\varphi,h} \\
                              &\geq& a \sum_{i=1}^{m}|\alpha_{i}| ||\chi_{\tau^{-n}(A_{i})}||_{\varphi,h} \\
															&=& a\sum_{i=1}^{m}|\alpha_{i}| \frac{1}{\varphi^{-1}\frac{1}{\int_{0}^{\mu(\tau^{-n}(A_{i}))}h(t)dt}} \\
															&=& aM \sum_{i=1}^{m}|\alpha_{i}| \frac{1}{\varphi^{-1}\frac{1}{\int_{0}^{\mu(A_{i})}h(t)dt}}\\
															&\geq& aM \sum_{i=1}^{m}|\alpha_{i}| ||\chi_{A_{i}}||_{\varphi,h} \\
															&=& aM ||\sum_{i=1}^{m}\alpha_{i} \chi_{A_{i}}||_{\varphi,h} \\
															&=& aM ||g||_{\varphi,h}\\
															&=& aM.
\end{eqnarray*}
Thus, we get
$$\displaystyle\lim_{n \to \infty}||C_{\tau}^{n}g||_{\varphi,h}=\infty.$$
Writing $g=g^{+}-g^{-},$ where $g\in S_{L^{\varphi,h}(\mu)}$ and $g^{+},g^{-}$ are positive and negative measurable functions. Then a sequence of simple functions $\{g_{k}\}_{k\in \mathbb{N}}$ with $|g_{k}|\leq 2|g|$ such that converges point wise to g be found and so the sequence $\{\varphi(g_{k})\}$ also converges to $\{\varphi(g)\}$ and using the Lebesgue dominated converges theorem, we have 
$$\displaystyle\lim_{k\to \infty} I_{\varphi,h}(g_{k})=I_{\varphi,h}(g).$$
Thus, $\displaystyle\lim_{k\to \infty}||g_{k}||_{\varphi,h}=||g||_{\varphi,h}=1.$
It follows form above computation that $\displaystyle\lim_{k\to \infty}||C_{\tau}^{n}g||_{\varphi,h}=\infty$ and hence by proposition (\ref{prop1}) $C_{\tau}$ is uniformly expansive.\\
(d) Suppose that $C_{\tau}$ is uniformly expansive. Then by part (d) of proposition (\ref{prop1}), we have for $S_{L^{\varphi,h}(\mu)}=B\cup C$, where 
$$\displaystyle\lim_{n \to \infty}||C_{\tau}^{n}g||_{\varphi,h}=\infty~\mbox{uniformly on }~B$$ and $$\displaystyle\lim_{n \to \infty}||C_{\tau}^{-n}g||_{\varphi,h}=\infty~\mbox{uniformly on }~C.$$
By setting $\mathbb{A}^{+}=\{A \in \mathbb{A}:0<\mu(A)<\infty\}$ and using the fact that simple functions are dense in $L^{\varphi,h}(\mu)$, we have 
$\mathbb{A}^{+}=\mathbb{A}_{B}^{+} \cup \mathbb{A}_{C}^{+},$ where $\mathbb{A}_{B}^{+}=\{A\in \mathbb{A}:\varphi^{-1}\(\frac{1}{\int_{0}^{\mu(A)}h(t)dt}\)\chi_{A}\in B\}$ and $\mathbb{A}_{C}^{+}=\{A\in \mathbb{A}:\varphi^{-1}\(\frac{1}{\int_{0}^{\mu(A)}h(t)dt}\)\chi_{A}\in C\}.$
As $$\varphi^{-1}\(\frac{1}{\int_{0}^{\mu(A)}h(t)dt}\)\chi_{A}\in S_{L^{\varphi,h}(\mu)}$$ and $$C_{\tau}^{n}\(\varphi^{-1}\(\frac{1}{\int_{0}^{\mu(A)}h(t)dt}\)\chi_{A}\)=\varphi^{-1}\(\frac{1}{\int_{0}^{\mu(A)}h(t)dt}\)\chi_{\tau^{-n}(A)},$$ it follows that from our assumption, we get
$$\displaystyle\lim_{n \to \infty} \frac{\varphi^{-1}\(\frac{1}{\int_{0}{\mu(A)}h(t)dt}\)}{\varphi^{-1}\(\frac{1}{\int_{0}^{\mu(\tau^{n}(A))}h(t)dt}\)}=\infty,~\mbox{uniformly on}~\mathbb{A}_{B}^{+},$$ and $$\displaystyle\lim_{n\to \infty}\frac{\varphi^{-1}\(\frac{1}{\int_{0}{\mu(A)}h(t)dt}\)}{\varphi^{-1}\(\frac{1}{\int_{0}^{\mu(\tau^{-n}(A))}h(t)dt}\)}=\infty,~\mbox{uniformly on}~\mathbb{A}_{C}^{+}$$ as desired.
In order to prove the converse, in view of part (d) of proposition (\ref{prop1}) it is sufficient to prove that $S_{L^{\varphi,h}(\mu)}=B \cup C,$ where 
$$\displaystyle\lim_{n \to \infty}||C_{\tau}^{n}g||_{\varphi,h}=\infty, \mbox{uniformly on B}$$
and $$\displaystyle\lim_{n \to \infty}||C_{\tau}^{n}g||_{\varphi,h}=\infty, \mbox{uniformly on C}.$$
By given condition, let $M>0$. Then there exist $m \in \mathbb{N}$ such that, $\forall n\geq N$
$$\frac{\varphi^{-1}\(\frac{1}{\int_{0}{\mu(A)}h(t)dt}\)}{\varphi^{-1}\(\frac{1}{\int_{0}^{\mu(\tau^{n}(A))}h(t)dt}\)}>M,~\forall~A\in \mathbb{A}_{B}^{+}$$
and $$\frac{\varphi^{-1}\(\frac{1}{\int_{0}{\mu(A)}h(t)dt}\)}{\varphi^{-1}\(\frac{1}{\int_{0}^{\mu(\tau^{-n}(A))}h(t)dt}\)}>M,~\forall A\in \mathbb{A}_{C}^{+}.$$
Let $g=\sum_{i=1}^{m}\alpha_{i}\chi_{A_{i}} \in S_{L^{\varphi,h}(\mu)}$ be a simple function where $A_{i}'s$ are pairwise disjoint measurable sets with $\mu(A_{i})>0,~\forall~i=\{1,2,3,...m\}$. As $\mathbb{A}^{+}=\mathbb{A}_{B}^{+}\cup \mathbb{A}_{C}^{+},$ then \\
$g_{\mathbb{A}_{B}^{+}}=\sum_{i=1,A_{i}\in \mathbb{A}_{B}^{+}}^{m}\alpha_{i}\chi_{A_{i}}$ and $g_{\mathbb{A}_{C}^{+}}=\sum_{i=1,A_{i}\in \mathbb{A}_{C}^{+}}^{m}\alpha_{i}\chi_{A_{i}}.$
Clearly, $g=g_{\mathbb{A}_{B}^{+}}+g_{\mathbb{A}_{C}^{+}}$ and so by using lemma there is some $c>0$ such that 
$$||g||_{\varphi,h}\geq c(||g||_{\varphi,h}+||g||_{\varphi,h}) \geq c||g||_{\varphi,h}$$ and since $||g||_{\varphi,h}=1$, we have 
$$2\geq c(||g_{\mathbb{A}_{B}^{+}}||_{\varphi,h}+||g_{\mathbb{A}_{C}^{+}}||_{\varphi,h}) \geq 2c.$$
This implies that $$||g_{\mathbb{A}_{B}^{+}}||_{\varphi,h}\geq \frac{1}{2}~\mbox{or}~||g_{\mathbb{A}_{C}^{+}}||_{\varphi,h}\geq \frac{1}{2}.$$
now, if $||g_{\mathbb{A}_{B}^{+}}||_{\varphi,h}\geq \frac{1}{2}$, then we have for all $n\geq N_{c}$
\begin{eqnarray*}
||C_{\tau}^{-n}g||_{\varphi,h} &=& ||C_{\tau}^{-n}\sum_{i=1}^{m} \alpha_{i}\chi_{A_{i}}||_{\varphi,h} \\
                               &\geq& c \sum_{i=1}^{m} |\alpha_{i}| ||\chi_{\tau^{-n}(A_{i})}||_{\varphi,h}\\
															&=& c \sum_{i=1}^{m} |\alpha_{i}| \varphi^{-1}\frac{1}{\varphi^{-1}\(\frac{1}{\int_{0}^{\mu(\tau^{n}(A_{i}))}h(t)dt}\)} \\
															&>& c M \sum_{i=1}^{m} |\alpha_{i}| \varphi^{-1}\frac{1}{\varphi^{-1}\(\frac{1}{\int_{0}^{\mu(A_{i})}h(t)dt}\)} \\
															&=& cM \sum_{i=1}^{m} |\alpha_{i}| ||\chi_{A_{i}}||_{\varphi,h} \\
															&=& cM ||\sum_{i=1}^{m} \alpha_{i} \chi_{A_{i}}||_{\varphi,h} \\
                              &=& cM ||g_{\mathbb{A}_{B}^{+}}||_{\varphi,h}\\
															&\geq& \frac{c M}{2}.											
\end{eqnarray*}
Similarly, if $||g_{\mathbb{A}_{C}^{+}}||_{\varphi,h}\geq \frac{1}{2}$, then we have 
$$||C_{\tau}^{n}g||_{\varphi,h}\geq \frac{c M}{2}.$$
Put $S^{a}_{L^{\varphi,h}(\mu)}=\{ g\in S_{L^{\varphi,h}(\mu)}: \mbox{g is simple function} \}$. Then $S^{a}_{L^{\varphi,h}(\mu)}=B^{a}\cup C^{a}$, in which 
$B^{a}=\{ g \in S^{a}_{L^{\varphi,h}(\mu)}: ||g_{\mathbb{A}_{B}^{+}}||_{\varphi,h}\geq \frac{1}{2}\}$ and $C^{a}=\{ g \in S^{a}_{L^{\varphi,h}(\mu)}: ||g_{\mathbb{A}_{C}^{+}}||_{\varphi,h}\geq \frac{1}{2}\}.$\\
Since $g \in S_{L^{\varphi,h}(\mu)}$ is arbitrary and so the result holds for all simple function. Now, using the fact that simple functions are dense in $L^{\varphi,h}(\mu)$ and doing the same calculation as in part (C) of this Theorem and using the Lebesgue dominated convergence theorem, we get
$$\displaystyle\lim_{k \to \infty} I_{\varphi,h}(g_{k})=I_{\varphi,h}(g)$$ and so by lemma(\ref{lemma1}), we get
$$\displaystyle\lim_{k \to \infty} ||g_{k}||_{\varphi,h}=1.$$ So, there exist $k_{o}>0$ such that $\forall k\geq k_{o}$, $||g_{k}||_{\varphi,h}\geq \frac{1}{2}.$\\
Thus, \begin{eqnarray*}
||C_{\tau}^{n} \frac{g_{k}}{||g_{k}||_{\varphi,h}}||_{\varphi,h} &<& 2||C_{\tau}^{n}g_{k}||_{\varphi,h} \\
                                                                 &<& 2 \int_{I}\varphi(g_{k}\circ \tau)^{*}(t)h(t)dt \\
																																&<& 4 \int_{I}\varphi(g \circ \tau)^{*}(t)h(t)dt \\
\end{eqnarray*}
i.e., $$||C_{\tau}^{n} \frac{g_{k}}{||g_{k}||_{\varphi,h}}||_{\varphi,h}<4||C_{\tau}^{n}g||_{\varphi,h}$$ 
and at least one of the following set must be infinite either $F_{1}(g)=\{k\in \mathbb{N}:\frac{g_{k}}{||g_{k}||_{\varphi,h}} \in B^{a}\},~\mbox{or}~F_{2}(g)=\{k\in \mathbb{N}:\frac{g_{k}}{||g_{k}||_{\varphi,h}} \in C^{a}\}.$\\
Now, $F_{1}(g)$ is infinite, then there is sequence $\{k_{j}\}_{j \in \mathbb{N}},$ such that $\{\frac{g_{k_{j}}}{||g_{k_{j}}||_{\varphi,h}}\}_{j \in \mathbb{N}} \subset B^{a}$ and using the above calculation, we see that $$||C_{\tau}^{-n}\frac{g_{k_{j}}}{||g_{k_{j}}||_{\varphi,h}}||_{\varphi,h}>\frac{M}{2},~n\geq N and M>0$$ and thus, it follows that $$ ||C_{\tau}^{-n}g||_{\varphi,h}>\frac{M}{2^{3}},\forall~n\geq N.$$
Similarly, if $F_{2}(g)$ is infinite, then we get 
$$ ||C_{\tau}^{-n}g||_{\varphi,h}>\frac{M}{2^{3}},\forall~n\geq N.$$
Taking $B=\{g \in S_{L^{\varphi,h}(\mu)}:F_{1}(g)=\infty\}$ and $C=\{S_{L^{\varphi,h}(\mu)}:F_{2}(g)=\infty\}$, then $S_{L^{\varphi,h}(\mu)}=B\cup C,$ such that 
$\displaystyle\lim_{n \to \infty}||C_{\tau}^{n}g||_{\varphi,h},$ uniformly on B and $\displaystyle\lim_{n \to \infty}||C_{\tau}^{-n}g||_{\varphi,h},$ uniformly on C. 

\end{proof}


\begin{thebibliography}{10}
	\bibitem{BS} C. Bennett and R. Sharpley, Interpolation of Operators, Pure and Applied Mathematics, 129, Academic Press London 1988. 
	
	\bibitem{B1} F. Bayart and E. Matheron, Dynamics of Linear Operators, Cambridge University Press, Cambridge, 2009.
	
	\bibitem{Berm} T. Bermudez, A. Bonilla, F. Martinez-Gimenez and A. Peris. Li–Yorke and distributionally chaotic operators. J. Math. Anal. Appl. 373(2011), no. 1, 83–93.
	
	\bibitem{Rom2004} Y. Cui, H. Hudzik, R. Kumar and L. Maligranda, Composition operators in Orlicz spaces, J. Austral. Math. Soc. 76(2004), no. 2, 189-206.
	
	\bibitem{Bam} Y. Estaremi and N. Bamerni, Li-Yorke chaos for composition operators on Orlicz space, Sep 2022, arXiv:2209.09140v1 [math.FA].
	
	\bibitem{Bam2} Y. Estaremi and N. Bamerni, Expansivity and structurally stability of composition operators on Orlicz spaces, Sep 2022, arXiv:2209.09587v2 [math.FA]. 
	
	\bibitem{god} G. Godefroy and J. H. Shapiro, Operators with dense, invariant, cyclic vector manifolds, J. Funct. Anal., 98(1991), no. 2, 229-269.
	
	\bibitem{Hud1} H. Hudzik, A. Kaminska and M. Mastylo, On geometric properties of Orlicz-Lorentz space, Canad. Math. Bull. 40(1997), No. 3, 316-329.
	
	\bibitem{Hud2} H. Hudzik, A. Kaminska and M. Mastylo, On the dual of Orlicz-Lorentz space, Proc. Amer. Math. Soc. 130 (2002), No.6, 1645-1654.
	
	\bibitem{Kra}  M. A. Krasnoselskii and Ya. B. Rutickii, Convex functions and Orlicz spaces (Noordhoff, Groningen, 1961).
	
	\bibitem{LOR} G. G. Lorentz, Some new functional spaces, Ann. Of Math. 51(1950), no.2, 37-55.  
	
	\bibitem{NC13} N. C. Bernardes Jr., A. Bonilla, V. M$\dot{u}$ller and A. Peris, Distributional chaos for linear operators, J. Funct. Anal. 265(2013), no. 9, 2143–2163.
	
	\bibitem{NC15} N. C. Bernardes Jr., A. Bonilla, V. M$\dot{u}$ller and A. Peris, Li-Yorke chaos in linear dynamics, Ergodic Theory Dynam. Systems 35(2015), no. 6, 1723–1745.
	
	\bibitem{NC18} N. C. Bernardes, Jr., A. Bonilla, A. Peris, and X. Wu, Distributional chaos for operators on Banach spaces. J. Math. Anal. Appl., 459(2018), no.2 797-821.
	
	\bibitem{NC21}  N. C. Bernards, and A.  Messaoudi, Shadowing and structural stability for operators. Ergodic Theory and Dynamical Systems, 41(2021), 961-980.
	
	\bibitem{NC2018} N. C. Bernardes, Jr., P. R. Cirilo, U. B. Darji, A. Messaoudi, and E. R. Pujals, Expansivity and shadowing in linear dynamics, J. Math. Anal. Appl., 461 (2018),  796–816.
	
	\bibitem{MAI1}M. Maiuriello, Expansivity and strong structural stability for composition operators on Lp -paces, Banach J. Math. Anal. (2022). https://doi.org/10.1007/s43037-022-00196-4
	
	\bibitem{MAI2} E . D'Aniello, U. B. Darji and M. Maiuriello, Generalized hyperbolicity and shadowing in Lp spaces. J.Difer. Equ. 298 (2021), 68–94
	
	\bibitem{ROL} R.Kumar and R. Kumar, Composition operators on Orlicz-Lorentz spaces, Integr. equ. opr. theory, 60 (2008),79-88.
	
	\bibitem{Raj} R. Kumar and R. Singh, Li-Yorke and Expansive Composition operators on Lorentz space, July 2022, arXiv:2208.00342v1 [math.FA]. 
	
	\bibitem{Rao} M. M. Rao and Z. D. Ren, Theory of Orlicz spaces, Marcel Dekker, New York, 1991.
	
	\bibitem{Rom1997} R. Kumar, Composition operators on Orlicz spaces, Integral Equations and Operator Theory 29(1997), 17-22.
	
	\bibitem{Rom2} R. Kumar and R. Kumar, Compact composition operators on Lorentz spaces, Mat. Vesnik 57(2005), 109-112.
	
	\bibitem{RK} R. K. Singh and J. S. Manhas, Composition operators on function spaces, NorthHolland Math. Studies 179, North-Holland, Amsterdam 1993.
	
	\bibitem{Shapiro}   J. H. Shapiro, Composition Operators and Classical Function Theory, Springer-Verlag, New York, (1993).
	
	\bibitem{Rudin} W. Rudin, Functional Analysis, Second Edition, McGraw-Hill Inc, New York, 1991.
	
\end{thebibliography}
\end{document}